\documentclass[a4paper, 11pt]{amsart}   
\usepackage{mathptmx, amssymb,amscd,latexsym, eulervm}   
\usepackage{amsmath}
\usepackage{amsthm}
\usepackage{mathdots}
\usepackage[colorlinks=true,linkcolor=blue,urlcolor=blue]{hyperref}
\usepackage{color}
\usepackage[onehalfspacing]{setspace}
\usepackage{tabularx}
\usepackage{amsfonts}
\usepackage{paralist}
\usepackage{aliascnt}
\usepackage[initials, lite]{amsrefs}
\usepackage{amscd}
\usepackage{blkarray}

\usepackage{mathbbol}
\usepackage{setspace}
\usepackage[inner=2.5cm,outer=2.5cm, bottom=3cm]{geometry}
\usepackage{calligra,mathrsfs}
\usepackage{tikz, tikz-cd}
\usetikzlibrary{patterns}

\usepackage{tikz}
\usetikzlibrary{matrix}
\usetikzlibrary{arrows,calc}
\allowdisplaybreaks


\BibSpec{collection.article}{%
	+{}  {\PrintAuthors}                {author}
	+{,} { \textit}                     {title}
	+{.} { }                            {part}
	+{:} { \textit}                     {subtitle}
	+{,} { \PrintContributions}         {contribution}
	+{,} { \PrintConference}            {conference}
	+{}  {\PrintBook}                   {book}
	+{,} { }                            {booktitle}
	+{,} { }                            {series}
	+{, vol.} { }                            {volume}
	+{,} { }                            {publisher}
	+{,} { \PrintDateB}                 {date}
	+{,} { pp.~}                        {pages}
	+{,} { }                            {status}
	+{,} { \PrintDOI}                   {doi}
	+{,} { available at \eprint}        {eprint}
	+{}  { \parenthesize}               {language}
	+{}  { \PrintTranslation}           {translation}
	+{;} { \PrintReprint}               {reprint}
	+{.} { }                            {note}
	+{.} {}                             {transition}
	+{}  {\SentenceSpace \PrintReviews} {review}
}
\AtBeginDocument{%
	\def\MR#1{}
}


\newcommand{\kk}{\mathbb{k}}

\newcommand{\NN}{\normalfont\mathbb{N}}
\newcommand{\ZZ}{\normalfont\mathbb{Z}}

\newcommand{\PP}{{\normalfont\mathbb{P}}}
\newcommand{\xx}{{\normalfont\mathbf{x}}}

\newcommand{\init}{{\normalfont\text{in}}}

\newcommand{\bn}{{\normalfont\mathbf{n}}}
\newcommand{\supp}{{\normalfont\text{supp}}}

\newcommand{\bm}{{\normalfont\mathbf{m}}}
\newcommand{\ttt}{{\normalfont\mathbf{t}}}
\newcommand{\sss}{{\normalfont\mathbf{s}}}

\newcommand{\fJ}{{\mathfrak{J}}}

\newcommand{\fS}{{\mathfrak{S}}}

\newcommand{\rank}{\normalfont\text{rank}}

\newcommand{\codim}{{\normalfont\text{codim}}}

\newcommand{\Tor}{{\normalfont\text{Tor}}}

\newcommand{\ee}{{\normalfont\mathbf{e}}}

\newcommand{\multProj}{\normalfont\text{MultiProj}}



\def\f0{\mathbf{0}}

\def\fs{\mathbf{s}}

\def\fc{\mathbf{c}}

\def\ft{\mathbf{t}}
\def\fr{\mathbf{r}}

\def\1{\mathbf{1}}

\newtheorem{theorem}{Theorem}[section]

\newtheorem{headthm}{Theorem}

\newaliascnt{headcor}{headthm}

\aliascntresetthe{headcor}

\newaliascnt{headconj}{headthm}

\aliascntresetthe{headconj}

\newaliascnt{corollary}{theorem}

\aliascntresetthe{corollary}

\newaliascnt{claim}{theorem}

\aliascntresetthe{claim}

\newaliascnt{lemma}{theorem}
\newtheorem{lemma}[lemma]{Lemma}
\aliascntresetthe{lemma}

\newaliascnt{conjecture}{theorem}
\newtheorem{conjecture}[conjecture]{Conjecture}
\aliascntresetthe{conjecture}

\newaliascnt{proposition}{theorem}
\newtheorem{proposition}[proposition]{Proposition}
\aliascntresetthe{proposition}

\theoremstyle{definition}
\newaliascnt{definition}{theorem}
\newtheorem{definition}[definition]{Definition}
\aliascntresetthe{definition}

\newaliascnt{notation}{theorem}

\aliascntresetthe{notation}

\newaliascnt{example}{theorem}
\newtheorem{example}[example]{Example}
\aliascntresetthe{example}

\newaliascnt{examples}{theorem}

\aliascntresetthe{examples}

\newaliascnt{remark}{theorem}
\newtheorem{remark}[remark]{Remark}
\aliascntresetthe{remark}

\newaliascnt{question}{theorem}

\aliascntresetthe{question}

\newaliascnt{questions}{theorem}

\aliascntresetthe{questions}

\newaliascnt{problem}{theorem}

\aliascntresetthe{problem}

\newaliascnt{construction}{theorem}

\aliascntresetthe{construction}

\newaliascnt{setup}{theorem}
\newtheorem{setup}[setup]{Setup}
\aliascntresetthe{setup}

\newaliascnt{algorithm}{theorem}

\aliascntresetthe{algorithm}

\newaliascnt{observation}{theorem}

\aliascntresetthe{observation}

\newaliascnt{defprop}{theorem}

\aliascntresetthe{defprop}

\def\equationautorefname~#1\null{(#1)\null}
\def\sectionautorefname~#1\null{Section #1\null}
\def\subsectionautorefname~#1\null{\S #1\null}


\title{Double Schubert polynomials do have saturated Newton polytopes}

\author{Federico Castillo}
\address[Castillo]{Departamento de Matem\'aticas, Pontificia Universidad Cat\'olica de Chile, Santiago, Chile}
\email{federico.castillo@mat.uc.cl}

\author{Yairon Cid-Ruiz}
\address[Cid-Ruiz]
{Department of Mathematics, KU Leuven, Celestijnenlaan 200B, 3001 Leuven, Belgium}
\email{yairon.cidruiz@kuleuven.be}

\author{Fatemeh Mohammadi}
\address[Mohammadi]
{Department of Mathematics and Computer Science, KU Leuven, 3001 Leuven, Belgium \\
Department of Mathematics and Statistics, UiT - The Arctic University of Norway, Troms\o, Norway}
\email{fatemeh.mohammadi@kuleuven.be}

\author{Jonathan Monta\~no}
\address[Monta\~no]{School of Mathematical and Statistical Sciences, Arizona State University, P.O. Box 871804, Tempe, AZ 85287-18041, United States of America}
\email{montano@asu.edu}

\keywords{Double Schubert polynomials; Schubert 
	ideals; Multidegrees; Polymatroids; Newton polytopes}
\subjclass[2010]{13H15, 14M15, 14C17, 52B40}

\begin{document}

	\dedicatory{Dedicated to Bernd Sturmfels, on the occasion of his 60th birthday, for his far-reaching contributions and impact on the careers of many young mathematicians.}
	
	\maketitle
	
	\vspace*{-.7cm}	
	\begin{abstract}
		We prove that double Schubert polynomials have the Saturated Newton Polytope property. This settles a conjecture by Monical, Tokcan and Yong. 
		Our ideas are motivated by the theory of multidegrees.
		We introduce a notion of standardization of ideals that enables us to study non-standard multigradings. 
		This allows us to show that the support of the multidegree polynomial of each Cohen-Macaulay prime ideal in a non-standard multigrading, and in particular, that of each Schubert determinantal ideal is a discrete polymatroid. 
	\end{abstract}
	
	\vspace*{-.3cm}
	\section{Introduction}
	
	Schubert polynomials are classical and important objects in algebraic combinatorics.
	They were introduced by Lascoux and Sch\"utzenberger \cite{SCUBERT_POLY_L_S} to study the cohomology classes of Schubert varieties.
	Since then, Schubert polynomials have played a fundamental role in algebraic combinatorics (see,~e.g.,~\cite{fink2018schubert, monical2019newton,BERGERON_BILLEY,BILLEY_STANLEY, KNUTSON_MILLER_SUBWORD, KNUTSON_MILLER_SCHUBERT, LLS} and the references therein).

	\medskip
	
	We first recall the definition of Schubert polynomials.
	Let $\mathscr{S}_p$ be the symmetric group on the set $[p] = \{1, \ldots, p\}$. 
	For every $i\in[p-1] = \{1,\ldots,p-1\}$ we have the transposition $\sigma_i=(i,i+1)\in \mathscr{S}_p$. Recall that the set $\mathfrak{T} = \{ \sigma_i \mid 1\leq i< p \}$ generates $\mathscr{S}_p$. 
	The \emph{length} $\ell(\pi)$ of a permutation $\pi$ is the least amount of elements in $\mathfrak{T}$ counting repetitions needed to obtain $\pi$ from the identity permutation. 
	The permutation $\pi_0=(p,p-1,\ldots,2,1)$ (in one-line notation) is the
	longest permutation and has length $\frac{p(p-1)}{2}$.
	We follow the notation of \cite{KNUTSON_MILLER_SCHUBERT} and \cite[Chapter 15]{miller2005combinatorial} to present permutations and Schubert polynomials.
	
	\begin{definition}
		The {\it double Schubert polynomial} $\fS_\pi(\ttt, \sss)  \in\ZZ[t_1,\ldots,t_p,s_1,\dots,s_p]$ of a permutation $\pi \in \mathscr{S}_p$ is defined recursively in the following way. 
		First we define $\fS_{\pi_0}=\prod_{i+j\leq p}(t_i-s_j)$, and for any permutation $\pi$ and transposition $\sigma_i$ with $\ell(\sigma_i\pi)<\ell(\pi)$ we set
		\[
		\fS_{\sigma_i\pi} \,=\, \dfrac{\fS_{\pi}-\sigma_i\fS_{\pi}}{t_i-t_{i+1}},
		\]
		where $\mathscr{S}_p$ acts only on $\ZZ[t_1,\ldots,t_p]$ by permutation of variables. 
		The \emph{(ordinary) Schubert polynomial} $\fS_\pi(\ttt, \mathbf{0}) \in\ZZ[t_1,\ldots,t_p]$ is obtained from  $\fS_\pi$ by setting each variable $s_j$ equal to $0$.
	\end{definition}
	The monomial expansion of ordinary Schubert polynomials has been combinatorially analyzed using different objects such as compatible sequences \cite{BILLEY_JOCKUSH_STANLEY}, reduced pipe dreams \cite{fomin1994grothendieck,BERGERON_BILLEY}, and Kohnert diagrams \cite{KOHNERT}.
	The description using pipe dreams also works for the double Schubert polynomials \cite[Corollary 16.30]{miller2005combinatorial}.
	We  also have a formula for double Schubert polynomials using bumpless pipe dreams \cite{LLS}.
	
	Following \cite{monical2019newton}, we say that a polynomial $f = \sum_{\bn}c_{\bn}\xx^\bn\in \ZZ[x_1,\ldots,x_n]$ has the \emph{Saturated Newton Polytope property} (SNP property for short) if the support  $\supp(f)=\{\bn\in\NN^n\mid c_\bn \neq 0\}$ of $f$ is equal to $\text{Newton}(f)\cap\NN^n$, where $\text{Newton}(f) = \text{ConvexHull}\{\bn\in\NN^n\mid c_\bn \neq 0\}$ denotes the \emph{Newton polytope} of $f$; in other words, if the support of $f$ consists of the integer points of a polytope.
	
	The main goal of this paper is to confirm the following challenging conjecture by Monical, Tokcan and Yong that appeared in \cite[Conjecture 5.2]{monical2019newton}.
	\begin{conjecture}[\cite{monical2019newton}]
		\label{conjecture}
		Double Schubert polynomials have the Saturated Newton Polytope property.
	\end{conjecture}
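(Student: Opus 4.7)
The plan is to exploit the interpretation, due to Knutson--Miller, of the double Schubert polynomial $\fS_\pi(\ttt,\sss)$ as the $\ZZ^{2p}$-multidegree of the Schubert determinantal ideal $I_\pi$ inside the polynomial ring $\kk[z_{ij}]_{1\le i,j\le p}$, where each variable $z_{ij}$ carries the bidegree $\ee_i-\ee_{p+j}$. Since matrix Schubert varieties are known to be Cohen--Macaulay and irreducible, $I_\pi$ is a Cohen--Macaulay prime. The conjecture therefore reduces to a commutative-algebraic assertion: the support of the multidegree polynomial of a Cohen--Macaulay prime in this particular non-standard multigrading is the full set of lattice points of a polytope.

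The immediate obstacle is that the grading above is \emph{non-standard}, whereas the literature on polymatroidal supports of multidegrees mostly concerns standard $\NN$-multigradings. Following the outline sketched in the abstract, I would first construct, for a general multigraded ring $R$, a \emph{standardization} $\widetilde R$ together with a multidegree-preserving comparison arranged so that (i) $\widetilde R$ carries a standard multigrading and (ii) the multidegree polynomial of a Cohen--Macaulay prime in $R$ can be recovered from that of its image in $\widetilde R$. For the Schubert setting this amounts to a change of variables that splits each bidegree $\ee_i-\ee_{p+j}$ into two independent non-negative contributions, after which a careful check should show that primeness, Cohen--Macaulayness, and the multidegree polynomial transform compatibly.

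Once the problem is standard-graded, the key technical statement is that for a Cohen--Macaulay prime $I$ in a standard multigraded polynomial ring, the support of its multidegree polynomial is a \emph{discrete polymatroid}, i.e.\ the set of lattice points of an integer base polytope satisfying the polymatroid exchange axiom. My strategy here is to interpret each multidegree as a mixed multiplicity via Hilbert-series/intersection-theoretic methods, and then use generic hyperplane sections (one per degree direction) to move between monomials of the support; Cohen--Macaulayness ensures that such generic sections preserve primeness in top dimension, producing the numerical identities required for the exchange property. Transporting this conclusion back through standardization gives that $\supp(\fS_\pi(\ttt,\sss))$ is a discrete polymatroid, and since integer base polytopes contain every lattice point of their convex hulls, this is precisely the SNP property.

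The main obstacle I foresee lies in the standardization step: the construction must be general enough to accommodate the non-standard bidegrees appearing for matrix Schubert varieties, yet delicate enough to preserve Cohen--Macaulayness, primeness, and the combinatorial shape of the multidegree polynomial. The polymatroid step is conceptually cleaner, but it too requires genuine input from Cohen--Macaulayness; without it, generic hyperplane sections need not preserve the primality structure that drives the exchange axiom, so the two steps must be set up hand-in-hand.
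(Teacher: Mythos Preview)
Your high-level strategy matches the paper's: interpret $\fS_\pi$ as the multidegree of the Cohen--Macaulay prime $I_\pi$, pass to a standard multigrading via a ``standardization'' substitution, and then invoke the polymatroid property of multidegrees of primes in standard multigradings. Two points, however, are handled differently in the paper and deserve attention.

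First, your claim that ``primeness\ldots\ transform[s] compatibly'' under standardization is where the real work lies, and it is not automatic. The substitution $x_{i,j}\mapsto w_{i,j}z_{i,j}$ does \emph{not} preserve primeness: Schubert determinantal ideals contain variables $x_{i,j}$ (whenever the rank condition at $(i,j)$ is zero), so the standardized ideal contains the reducible element $w_{i,j}z_{i,j}$ and is visibly not prime. The paper first strips off all variable generators, writing $I = I'R + (x_{i,j}\mid (i,j)\in\mathcal L)$ with no $x_{i,j}\in I'$, and standardizes only $I'$. Cohen--Macaulayness is then used precisely here, and not where you place it: CM of $R/I'$ transfers to $S/J'$ and forces each $w_{i,j}z_{i,j}$ to be a non-zero-divisor on $S/J'$, so primeness of $J'$ can be checked after inverting all the $z_{i,j}$, where an explicit automorphism identifies $J'$ with a polynomial extension of the prime $I'$. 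The multidegree of $R/I$ then factors as $\prod_{(i,j)\in\mathcal L}(t_i+s_j)$ times the multidegree of the standardized prime, and one concludes by the fact that a Minkowski sum of discrete polymatroids is again a discrete polymatroid.

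Second, the standard-graded polymatroid statement you propose to prove via the exchange axiom and generic hyperplane sections is, in the paper, taken as a black box from prior work (Castillo--Cid-Ruiz--Li--Monta\~no--Zhang), where it holds for \emph{any} homogeneous prime with no Cohen--Macaulay hypothesis; the proof there goes through an explicit description of the support by codimension inequalities for contracted ideals rather than through exchange moves. So your attribution of the role of Cohen--Macaulayness is inverted: it is essential for the standardization step (to recover primeness), and unnecessary for the standard-graded polymatroid step.
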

	We confirm the conjecture by proving a stronger result that the support of each double Schubert polynomial is a discrete polymatroid. 
	A \textit{discrete polymatroid} $\mathcal{P}$ on $[n]=\{1,\ldots,n\}$ is a collection of points in $\NN^n$ of the following form
	\[
	\mathcal{P=}\Big\{(x_1,\ldots, x_n)\in \NN^n \;\mid\; \sum_{j\in\fJ} x_j\leq r(\fJ), \;\forall \fJ\subsetneq [n], \;\sum_{i\in [n]} x_i=r([n])\Big\}
	\]
	with $r$ 
	being a rank function on $[n]$. A \emph{rank function on} $[n]$ 
	is a function $r : 2^{[n]}\rightarrow \NN$ satisfying the following three properties: (i) $r(\emptyset) = 0$, (ii) $r(\fJ_1)\leq r(\fJ_2)$ if $\fJ_1\subseteq \fJ_2 \subseteq [n]$, and (iii)  $r(\fJ_1\cap \fJ_2)+r(\fJ_1\cup \fJ_2)\leq r(\fJ_1)+r(\fJ_2)$ if $\fJ_1,\fJ_2 \subseteq [n]$.

	The following is the main theorem of this article.

	\begin{headthm}\label{thmA}
		Let $\pi \in \mathscr{S}_p$ be a permutation and $\fS_\pi(\ttt, \sss)  \in\ZZ[t_1,\ldots,t_p,s_1,\dots,s_p]$ be the corresponding double Schubert polynomial.
		Then, the support $\supp(\fS_\pi) \subset \NN^{2p}$ of $\fS_\pi$ is a discrete polymatroid on $[2p]=\{1,\ldots,2p\}$.
		In particular, the statement of \autoref{conjecture} holds.
	\end{headthm}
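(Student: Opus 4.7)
The plan is to realize double Schubert polynomials as multidegrees of Schubert determinantal ideals and then prove a more general result about multidegrees of Cohen-Macaulay prime ideals. By the Knutson-Miller theorem, the double Schubert polynomial $\fS_\pi(\ttt,\sss)$ coincides (up to a standard sign/variable convention) with the $\ZZ^{2p}$-multidegree of the Schubert determinantal ideal $I_\pi$ in the polynomial ring $R = \kk[z_{ij} : 1\le i,j\le p]$, where each generic matrix entry $z_{ij}$ is assigned multidegree $\ee_i + \ee_{p+j} \in \ZZ^{2p}$. Thus every monomial $\ttt^\fa \sss^\fb$ in $\fS_\pi$ corresponds to a lattice point $(\fa,\fb)\in\NN^{2p}$ of the multidegree polynomial, and Theorem~\ref{thmA} is equivalent to saying that this support is a discrete polymatroid on $[2p]$. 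Since $I_\pi$ is known to be prime and Cohen-Macaulay, it suffices to prove the following general statement: \emph{the support of the multidegree polynomial of every Cohen-Macaulay prime ideal in any (possibly non-standard) multigraded polynomial ring is a discrete polymatroid.}

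The obstacle is that the grading on $R$ above is non-standard: the variable $z_{ij}$ has degree $\ee_i + \ee_{p+j}$, which is not a standard basis vector. In the standard multigraded setting, polymatroidality of supports of multidegrees is more tractable because lattice points can be related to intersection numbers with products of general hyperplanes in $\PP^{e_1}\times\cdots\times\PP^{e_n}$. My key technical tool, as announced in the abstract, would therefore be a \emph{standardization} procedure: I would associate to $R$ and $I_\pi$ an auxiliary polynomial ring $\widetilde R$ equipped with a standard $\ZZ^{2p}$-multigrading together with an ideal $\widetilde I \subset \widetilde R$, constructed by factoring each variable of multidegree $\ee_i + \ee_{p+j}$ through two new variables of degrees $\ee_i$ and $\ee_{p+j}$ respectively (and correspondingly modifying $I_\pi$). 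The design principle is that the construction must preserve (i) primeness, (ii) Cohen-Macaulayness, and (iii) the support of the multidegree polynomial.

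With a working standardization in hand, the proof of the general statement reduces to the standard multigraded case. Here, I would exploit the geometric interpretation of multidegrees: the coefficient of $\xx^\fn$ in the multidegree polynomial counts the degree of a certain projection of $V(\widetilde I)$ and is nonzero exactly when a corresponding generic intersection is nonempty. For a \emph{prime} ideal the scheme $V(\widetilde I)$ is irreducible, and Cohen-Macaulayness forces equidimensionality of its general hyperplane sections. The submodularity inequality $r(\fJ_1\cap \fJ_2) + r(\fJ_1\cup \fJ_2)\le r(\fJ_1)+r(\fJ_2)$ for the rank function of the support should then follow from classical dimension inequalities for projections of irreducible varieties onto products of coordinate factors, together with the fact that $\dim V(\widetilde I)$ equals the total degree of the multidegree monomials. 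The conditions $r(\emptyset)=0$ and monotonicity are immediate from the definition.

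The main obstacle I anticipate is engineering the standardization so that all three properties above — primeness, Cohen-Macaulayness, and exact identification of multidegree supports — are simultaneously preserved, since naive constructions such as just ``splitting variables'' tend to break at least one of them. Once the correct standardization is in place, the geometric argument producing the polymatroid rank function from projections of the irreducible Cohen-Macaulay variety $V(\widetilde I)$ should proceed along well-understood lines, and applying the whole package to $I_\pi$ yields the desired conclusion for $\fS_\pi$, confirming \autoref{conjecture}.
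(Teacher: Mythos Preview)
Your overall strategy matches the paper's: realize $\fS_\pi$ as a multidegree of the Schubert determinantal ideal, standardize the non-standard $\ZZ^{2p}$-grading by splitting $x_{i,j}\mapsto w_{i,j}z_{i,j}$, and then invoke the polymatroid result for multidegrees of primes in the standard multigraded setting. (That last step is already \autoref{thm_pos_multdeg}, taken from \cite{castillo2020multidegrees}, so your sketched geometric argument via projections is not needed as new work.)

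The genuine gap is precisely the obstacle you flag but do not resolve: the naive standardization $J=\phi(I)S$ \emph{never} preserves primeness in the cases of interest. Whenever a variable $x_{i,j}$ lies in the prime $I$ --- and Schubert determinantal ideals typically contain many variables --- the product $w_{i,j}z_{i,j}$ lies in $J$ while neither factor does, so $J$ is not prime. There is no alternative ``correct standardization'' that simultaneously achieves your design goals (i)--(iii); the paper does not find one either. Its resolution is to abandon that requirement and instead proceed in three steps: (a) strip off the variables in $I$, writing $I = I'R + (x_{i,j} : (i,j)\in\mathcal L)$ with no variable in $I'$, so that $\mathcal C(R/I;\ttt,\sss)=\prod_{(i,j)\in\mathcal L}(t_i+s_j)\cdot\mathcal C(R/I'R;\ttt,\sss)$ and the support becomes a Minkowski sum of polymatroids; (b) standardize only $I'$ to obtain $J'$; and (c) prove $J'$ is prime by a localization trick --- Cohen--Macaulayness of $S'/J'$ together with $x_{i,j}\notin I'$ and preservation of codimension forces each $w_{i,j}z_{i,j}$ to be a nonzerodivisor on $S'/J'$, so primeness can be checked after inverting all $z_{i,j}$, where the automorphism $w_{i,j}\mapsto w_{i,j}/z_{i,j}$ identifies $J'$ with an extension of the prime $I'$. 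This interplay of the linear-part splitting with the Cohen--Macaulay--plus--localization argument for primeness is exactly the missing idea in your proposal; note in particular that Cohen--Macaulayness is used not merely to control dimensions but as the mechanism that makes step~(c) work.
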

	
	Our approach to prove \autoref{thmA} can be summarized in the following quote by Miller and Sturmfels \cite[Introduction to Chapter 15]{miller2005combinatorial}:
	\emph{``We consider the finest possible multigrading, which demands the refined toolkit
		of a new generation of combinatorialists.''}
	More precisely, we utilize the result that double Schubert polynomials equal the multidegree polynomial of Schubert determinantal ideals with the aforementioned \emph{``finest possible multigrading''} (see \cite[Theorem 15.40]{miller2005combinatorial}), and then we develop a method of \emph{standardization of ideals}.
	This process of standardization allows us to study multidegrees in certain non-standard multigradings by reducing the problem to a standard multigraded setting.
	Our main tool is \autoref{thm_pos_multdeg} from \cite{castillo2020multidegrees} which shows that the support of the multidegree polynomial of any multihomogeneous prime ideal (with usual standard multigrading) is a discrete polymatroid. 
	Here we extend this theorem to the family of non-standard multigradings that we study. 
	
	Much interest has been paid to the important conjectures proposed in \cite{monical2019newton} and a number of them have already been confirmed (see \cite{fink2018schubert}).
	Therefore, \autoref{thmA} settles a hitherto remaining conjecture from \cite{monical2019newton} and gives further evidence to the ubiquity of the SNP property in many ``combinatorially defined polynomials''. \autoref{thmA} also gives more evidence for the presence of the {\it Lorentzian} property in double Schubert polynomials as conjectured in \cite{huh2022logarithmic}.

	The structure of the paper is as follows. We review the notion of multidegrees in \autoref{sect_recap_multdeg} and recall the connection between double Schubert polynomials and Schubert determinantal ideals in \autoref{sec:det_ideals}.
	\autoref{sec:main} contains our main results, in particular the proof of \autoref{thmA}.
	
	\medskip
	
	\noindent
	\textbf{Acknowledgments.} 
	F.C. thanks Ghent University for their hospitality. 
	F.C. was partially supported by  FONDECYT Grant 1221133.
	Y.C.R. was partially supported by an FWO Postdoctoral Fellowship (1220122N). 
	F.M. was partially supported by 
	FWO grants (G023721N, G0F5921N), the KU Leuven iBOF/23/064 grant, and the UiT Aurora MASCOT project. 
	J.M. was partially supported by NSF Grant DMS \#2001645/2303605.
	We thank the reviewer for helpful comments and suggestions.

	\section{A short recap on multidegrees}
	\label{sect_recap_multdeg}
	
	In this short section, we briefly recall the notion of multidegrees and some of its basic properties; for more details the reader is referred to \cite{miller2005combinatorial, cidruiz2021mixed}.
	
	Let $\kk$ be a field and $R = \kk[x_1,\ldots,x_n]$ be a $\ZZ^p$-graded polynomial ring (for now, we do not assume the grading to be positive).  
	Let $M$ be a finitely generated $\ZZ^p$-graded module and $F_\bullet$ be a $\ZZ^p$-graded free $R$-resolution 
	$
	F_\bullet : \; \cdots \rightarrow F_i \rightarrow F_{i-1} \rightarrow \cdots \rightarrow F_1 \rightarrow F_0
	$
	of $M$.
	Let $t_1,\ldots,t_p$ be variables over $\ZZ$ and consider the polynomial ring $\ZZ[\ttt] = \ZZ[t_1,\ldots,t_p]$, where the variable $t_i$ corresponds with the $i$-th elementary vector $\ee_i \in \ZZ^p$. 
	If we write $F_i = \bigoplus_{j} R(-\mathbf{b}_{i,j})$ with $\mathbf{b}_{i,j} = (\mathbf{b}_{i,j,1},\ldots,\mathbf{b}_{i,j,p}) \in \ZZ^p$, then we define the Laurent polynomial 
	$
	\left[F_i\right]_\ttt \, := \, \sum_{j} \ttt^{\mathbf{b}_{i,j}} = \sum_{j} t_1^{\mathbf{b}_{i,j,1}} \cdots t_p^{\mathbf{b}_{i,j,p}}.
	$
	Then, the \emph{K-polynomial} of $M$ is defined by 
	$$
	\mathcal{K}(M;\ttt) \, := \, \sum_{i} {(-1)}^i \left[ F_i \right]_\ttt.
	$$
	It turns out that, even if the grading of $R$ is non-positive and we do not have a well-defined notion of Hilbert series, the above definition of K-polynomial is an invariant of the module $M$ and it does not depend on the chosen free $R$-resolution $F_\bullet$ (see \cite[Theorem 8.34]{miller2005combinatorial}).

	\begin{definition}
		The \emph{multidegree polynomial} of a finitely generated $\ZZ^p$-graded $R$-module $M$ is the homogeneous polynomial $\mathcal{C}(M; \ttt) \in \ZZ[\ttt]$ given as the sum of all terms in 
		$$
		\mathcal{K}(M; \mathbf{1} - \ttt) = \mathcal{K}(M; 1-t_1,\ldots,1-t_p) 
		$$
		having total degree $\codim(M) = n - \dim(M)$.
	\end{definition}

	One case of particular interest is when $R$ is a standard multigraded polynomial ring. 
	We say that $R$ is \emph{standard $\ZZ^p$-graded} if the total degree of each variable $x_i$ is equal to one (i.e.,~for each $1 \le i \le n$, we have $\deg(x_i) = \ee_{k_i} \in \ZZ^p$ with $1 \le k_i \le p$).
	The study of standard multigraded algebras is of utmost importance as they correspond with closed subschemes of a product of projective spaces (see, e.g., \cite{castillo2020multidegrees} and the references therein).
	Since the coefficients of the multidegree polynomial are non-negative in the standard multigraded case, it becomes natural to address the positivity of these coefficients. 
	For each subset $\mathfrak{J} = \{j_1,\ldots,j_k\} \subseteq [p] = \{1, \ldots, p\}$ denote by $R_{(\fJ)}$ the $\ZZ^k$-graded $\kk$-algebra given by 
	$$
	R_{(\fJ)} := \bigoplus_{\substack{i_1\ge 0,\ldots, i_p\ge 0\\ i_{j} = 0 \text{ if } j \not\in \fJ}} {\left[R\right]}_{(i_1,\ldots,i_p)},
	$$
	and for any $R$-homogeneous ideal $I \subset R$ we define $I_{(\fJ)}$ as the contraction $I_{(\fJ)} := I \cap R_{(\fJ)}$.	
	The following theorem completely characterizes the positivity of multidegrees and is our main tool to prove \autoref{thmA}.

	\begin{theorem}[\cite{castillo2020multidegrees}]
		\label{thm_pos_multdeg}
		Let $R=\kk[x_1,\ldots,x_n]$ be a standard $\ZZ^p$-graded polynomial ring.
		Let $I \subset R$ be an $R$-homogeneous prime ideal.  
		Write the multidegree polynomial of $\mathcal{C}(R/I;\ttt)$ as
		$$
		\mathcal{C}(R/I;\ttt) \,=\, \sum_{\substack{\bn \in \NN^p \\ |\bn| = \codim(I)}} c_{\bn} \ttt^\bn \; \in \; \NN[t_1,\ldots, t_p].
		$$
		Then, for all $\bn = (n_1,\ldots,n_p) \in \NN^p$ with $|\bn| = \codim(I) = n - \dim(R/I)$, we have that $c_\bn > 0$ if and only if for each $\fJ = \{j_1,\ldots,j_k\} \subseteq [p]$ the inequality 
		$
		n_{j_1} + \cdots + n_{j_k}  \, \ge \, \codim\left(I_{(\fJ)}\right)
		$
		holds.
		Furthermore, the support of $\mathcal{C}(R/I; \ttt)$ is a discrete  polymatroid.
	\end{theorem}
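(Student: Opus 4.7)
The plan is to interpret the multidegrees geometrically as intersection numbers in the Chow ring of a product of projective spaces, and then to read off the polymatroid structure from submodularity of dimensions of projections of $X := V_+(I)$.

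\textbf{Geometric reformulation.} In the standard $\ZZ^p$-graded setting, $R$ is the multihomogeneous coordinate ring of $\bXX := \PP^{m_1}\times\cdots\times\PP^{m_p}$, where $m_j+1$ is the number of variables of degree $\ee_j$. Since $I$ is prime, $X$ is an integral closed subscheme. By the standard identification \cite[Ch.~8]{miller2005combinatorial}, $\mathcal{C}(R/I;\ttt)$ represents the class $[X]$ in $A^*(\bXX) \cong \ZZ[h_1,\ldots,h_p]/(h_1^{m_1+1},\ldots,h_p^{m_p+1})$ under $t_j \leftrightarrow h_j$, so that for $|\bn|=\codim(I)$,
\[
c_\bn \,=\, \deg\bigl(X \cap L_\bn\bigr),
\]
where $L_\bn = L_1\times\cdots\times L_p$ with $L_j\subseteq\PP^{m_j}$ a generic linear subspace of dimension $n_j$. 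In particular, $c_\bn>0$ if and only if $X\cap L_\bn$ is nonempty for a generic choice.

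\textbf{Necessity and sufficiency.} Primeness of $I$ ensures that the defining ideal of $\overline{\pi_\fJ(X)} \subseteq \prod_{j\in\fJ}\PP^{m_j}$ equals $I_{(\fJ)}$ for every $\fJ\subseteq[p]$. If $c_\bn>0$, then projecting $X\cap L_\bn$ shows that $\overline{\pi_\fJ(X)}\cap\prod_{j\in\fJ}L_j\neq\emptyset$; a dimension count for the generic intersection yields $\codim(I_{(\fJ)})+\sum_{j\in\fJ}(m_j-n_j)\leq\sum_{j\in\fJ}m_j$, i.e., $\sum_{j\in\fJ}n_j\geq\codim(I_{(\fJ)})$. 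For the converse, assume all the inequalities hold and construct $L_\bn$ with $X\cap L_\bn$ nonempty by an inductive Bertini argument: pick the defining hyperplanes of the $L_j$'s one at a time, ensuring at each stage that the partial intersection with $X$ remains nonempty, equidimensional, and generically transverse. The full set of hypotheses on every $\fJ$ is precisely what prevents premature emptiness on any projection, and irreducibility of $X$ is essential to preserve equidimensionality. This sufficiency direction is the technical heart of the theorem.

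\textbf{Polymatroid structure.} Setting $r(\fJ):=\codim(I)-\codim\bigl(I_{([p]\setminus\fJ)}\bigr)$ and applying the necessity criterion to both $\fJ$ and $[p]\setminus\fJ$ together with $|\bn|=\codim(I)$, the criterion rewrites as
\[
\supp\bigl(\mathcal{C}(R/I;\ttt)\bigr) \,=\, \Bigl\{\bn\in\NN^p \,:\, \sum_{j\in[p]}n_j=r([p]),\ \sum_{j\in\fJ}n_j\leq r(\fJ)\ \text{for all } \fJ\subsetneq[p]\Bigr\}.
\]
The normalization $r(\emptyset)=0$, $r([p])=\codim(I)$, and monotonicity of $r$ are immediate. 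Submodularity of $r$ reduces to submodularity of the function $\fJ\mapsto\dim\overline{\pi_\fJ(X)}$, a classical fact for projections of an integral variety provable via comparison of fiber products. Therefore $r$ is a polymatroid rank function and $\supp(\mathcal{C}(R/I;\ttt))$ is a discrete polymatroid, as claimed.
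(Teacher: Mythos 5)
Your geometric framework (multidegrees as intersection numbers against generic products of linear subspaces, necessity via projections, and the polymatroid structure via submodularity of $\fJ\mapsto\dim\overline{\pi_\fJ(X)}$) is exactly the content of the results from \cite{castillo2020multidegrees} that the paper does not reprove but simply cites: its proof of this theorem consists of invoking Remark 2.9, Theorem A and Proposition 5.1 of that reference and then checking that $s(\fJ)=\sum_{j\in\fJ}m_j+r([p]\setminus\fJ)-r([p])$ is a rank function. Your necessity argument and your polymatroid rewriting are correct, and your rank function $r(\fJ)=\codim(I)-\codim\big(I_{([p]\setminus\fJ)}\big)$ coincides with the paper's $s(\fJ)$, with submodularity coming from the (true, and provable by generic-fiber-dimension arguments) submodularity of $\fJ\mapsto\dim\overline{\pi_\fJ(X)}$ together with the fact that complementation preserves submodularity.

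The genuine gap is the sufficiency direction, which you label the technical heart and then dispatch with an ``inductive Bertini argument'' in which one picks hyperplanes one at a time ``ensuring at each stage that the partial intersection with $X$ remains nonempty, equidimensional, and generically transverse.'' That sentence restates what must be proved rather than proving it. Concretely: after cutting $X$ by one generic hyperplane pulled back from the $j$-th factor, the intersection need not be irreducible, and for an irreducible component $X'$ of the section there is no a priori control of $\dim\overline{\pi_{\fJ}(X')}$ in terms of $\dim\overline{\pi_{\fJ}(X)}$; the inequalities assumed for $(X,\bn)$ do not formally descend to $(X',\bn-\ee_j)$, and showing that \emph{some} component inherits all of them is precisely the inductive core of the proof of Theorem A in \cite{castillo2020multidegrees}, which requires dedicated lemmas on how the dimensions of \emph{all} projections of \emph{each} component behave under generic hyperplane sections. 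Without that input your induction does not close. A minor further caveat: the identification $c_\bn=\deg(X\cap L_\bn)$ presupposes $n_j\le m_j$ for all $j$ and $X=\multProj(R/I)\neq\emptyset$, so primes containing a whole degree block of variables (e.g.\ $I=(x_1,\ldots,x_n)$ when $p=1$, where $c_{n}=1$ but the dual class has a negative exponent) must be excluded or handled by the algebraic conventions of \cite{castillo2020multidegrees}. If you do not intend to reprove Theorem A of that paper, the efficient route is the one the authors take: cite it and only verify the translation into polymatroid form.
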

	\begin{proof}
		Consider the  standard $\ZZ^{p}$-graded polynomial ring  $R' = R[x_{n+1},\ldots,x_{n+p}]$ with $\deg(x_{n+i}) = \ee_i \in \ZZ^p$ and notice that $\mathcal{C}(R/I;\ttt) = \mathcal{C}(R'/IR';\ttt)$. 
		Thus we assume that $I$ is a relevant prime (i.e., $I \not\supset \bigoplus_{i_1\ge 1,\ldots, i_p\ge 1} {\left[R\right]}_{(i_1,\ldots,i_p)}$), and so $\multProj(R/I) \neq \emptyset$.
		We embed $X = \multProj(R/I)$ as a closed subscheme of a multiprojective space $\PP:=\PP_\kk^{m_1} \times_\kk \cdots \times_\kk \PP_\kk^{m_p}$.
		From \cite[Remark 2.9]{castillo2020multidegrees} we have that $\bn  \in \supp(\mathcal{C}(R/I;\ttt))$ if and only if $\deg_\PP^{\bm-\bn}(X)>0$ where $\bm - \bn=(m_1-n_1,\ldots,m_p-n_p)$.
		Then, \cite[Theorem A]{castillo2020multidegrees} implies that $\bn \in \supp(\mathcal{C}(R/I;\ttt))$ if and only if $|\bn| = \codim(I)$ and $\sum_{j \in \fJ }n_{j} \ge  \codim\left(I_{(\fJ)}\right)$ for each $\fJ \subseteq [p]$. 
		Equivalently, we obtain that $\bn \in \supp(\mathcal{C}(R/I;\ttt))$ if and only if $|\bn| = \codim(I)$ and 
		$$
		\sum_{j \in \fJ }n_{j} \,\le\, \codim(I) -  \codim\left(I_{([p] \setminus \fJ)}\right) \,=\, \sum_{j \in \fJ} m_j + r([p] \setminus \fJ) - r([p])
		$$ 
		for each $\fJ  \subseteq [p]$, where $r : 2^{[p]} \rightarrow \NN$ is the rank function $r(\fJ) := \dim\big(\multProj\big(R_{(\fJ)}/I_{(\fJ)}\big)\big)$ (see \cite[Proposition 5.1]{castillo2020multidegrees}).
		Finally, we can check that $s : 2^{[p]} \rightarrow \NN$ with $s(\fJ) := \sum_{j \in \fJ} m_j + r([p] \setminus \fJ) - r([p])$ is a rank function (see, e.g., \cite[\S 44.6f]{schrijver2003combinatorial}), and so it follows that $\supp(\mathcal{C}(R/I;\ttt))$ is a polymatroid.
	\end{proof}

	\section{Schubert determinantal ideals}
	\label{sec:det_ideals}
	Here we recall the connection between double Schubert polynomials and Schubert determinantal ideals (for more details, the reader is referred to \cite[Chapters~15,~16]{miller2005combinatorial}). 
	
	First, we define matrix Schubert varieties and Schubert determinantal ideals by following \cite[Chapter 15]{miller2005combinatorial}. 
	Let $\kk$ be an algebraically closed field and  $M_p(\kk)$ be the $\kk$-vector space of $p\times p$ matrices with entries in $\kk$. 
	As an affine variety we define its coordinate ring as $\widetilde{R} =\kk[x_{i,j} \mid (i,j)\in[p]\times[p]]$. 
	Furthermore, we consider a $(\ZZ^p\oplus \ZZ^p)$-grading on $\widetilde{R}$ by setting $\deg(x_{i,j}) = \ee_i \oplus -\ee_j \in \ZZ^p \oplus \ZZ^p$, where $\ee_i \in \ZZ^p$ denotes the $i$-th elementary vector.

	\begin{definition}[{see \cite[Chapter 15]{miller2005combinatorial}}] 
		Let $\pi$ be a permutation matrix. 
		The \emph{matrix Schubert variety} $\overline{X_\pi}\subset M_p(\kk)$ is the subvariety given by
		$
		\overline{X_\pi}=\{Z\in M_p(\kk)\mid \rank(Z_{m\times n})\leq \rank(\pi_{m\times n}) \; \text{ for all }\; m,n\},
		$
		where $Z_{m\times n}$ is the restriction to the first $m$ rows and $n$ columns. 
		The \emph{Schubert determinantal ideal} $I_\pi \subset \widetilde{R}$ is the $\widetilde{R}$-homogeneous ideal generated by all minors in $\mathbf{X}_{m \times n}$ of size $1 + \rank(\pi_{m \times n})$ for all  $m$ and $n$, where $\mathbf{X} = (x_{i,j})$ is the $p\times p$ matrix with the variables of $\widetilde{R}$. 
	\end{definition}
	The following theorem collects several results of fundamental importance to our approach.
	In particular, it shows that double Schubert polynomials equal the multidegree polynomial of matrix Schubert varieties.
	To define multidegrees over $\widetilde{R}$ with its $(\ZZ^p \oplus \ZZ^p)$-grading, we consider the polynomial ring $\ZZ[\ttt,\sss] = \ZZ[t_1,\ldots,t_p,s_1,\ldots,s_p]$, where $t_i$ has degree $\ee_i \oplus \mathbf{0} \in \ZZ^p \oplus \ZZ^p$ and $s_i$ has degree $\mathbf{0} \oplus \ee_i \in \ZZ^p \oplus \ZZ^p$.
	
	\begin{theorem}
		\label{thm_Schubert}
		Let $\pi \in \mathscr{S}_p$ be a permutation and denote also by $\pi$ the corresponding permutation matrix.
		Then, the following statements hold: 
		\begin{enumerate}[\rm (i)]
			\item $I_\pi$ is a prime ideal, and so it coincides with the ideal $I(\overline{X_\pi})$ of polynomials vanishing on the matrix Schubert variety $\overline{X_\pi}$. \hfill{{\rm(}\cite{FULTON_SCHUBERT}, \cite[Corollary 16.29]{miller2005combinatorial}{\rm)}}
			\item $\widetilde{R}/I_\pi$ is a Cohen-Macaulay ring. \hfill{{\rm(}\cite{FULTON_SCHUBERT}, \cite[Corollary 16.44]{miller2005combinatorial}{\rm)}}
			\item $\fS_\pi(\ttt, \mathbf{s}) = \mathcal{C}(\widetilde{R}/I_\pi; \ttt, \sss)$.\hfill {{\rm(}\cite{FEHER_RIMANYI},\cite{KNUTSON_MILLER_SCHUBERT}, \cite[Theorem 15.40]{miller2005combinatorial}{\rm)}}
		\end{enumerate}
	\end{theorem}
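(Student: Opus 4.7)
The proof strategy splits into two phases: parts (i) and (ii) are proved together via an antidiagonal Gr\"obner degeneration of $I_\pi$, while (iii) is proved by induction on the length $\ell(\pi)$ mirroring the recursion defining $\fS_\pi$.

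For (i) and (ii), the plan is to fix a term order on $\widetilde{R}$ for which the initial term of each size-$k$ minor is the product of its antidiagonal entries. Knutson--Miller's theorem then identifies $\iniTerm(I_\pi)$ with the Stanley--Reisner ideal of the pipe dream complex $\mathcal{PD}(\pi)$. The combinatorial heart of the argument is to verify that $\mathcal{PD}(\pi)$ is a shellable simplicial ball, hence Cohen--Macaulay and reduced. Since both properties lift from an initial ideal back to the original ideal with respect to a suitable weight, this yields (ii) together with the reducedness of $I_\pi$. Primality in (i) then follows from irreducibility of $\overline{X_\pi}$, which one deduces from its description as the closure of a single orbit for a $B_{-} \times B_{+}$-action on $M_p(\kk)$ via the Bruhat decomposition of $\text{GL}_p$, combined with the reducedness obtained above.

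For (iii), the base case is $\pi = \pi_0$, where a direct inspection of the rank conditions shows $I_{\pi_0} = (x_{i,j} \mid i+j \le p)$. These variables form a regular sequence of distinct multidegrees $\ee_i \oplus -\ee_j$, so the Koszul complex is a minimal $\ZZ^p\oplus\ZZ^p$-graded free resolution; it yields a factored K-polynomial, and extracting the top-codimension part of the corresponding $\mathcal{C}$-polynomial recovers $\prod_{i+j\le p}(t_i-s_j) = \fS_{\pi_0}$. For the inductive step, I would verify that the multidegree polynomials of matrix Schubert varieties transform under divided differences exactly as double Schubert polynomials do. Geometrically this is achieved by comparing $\overline{X_{\mathfrak{s}_i\pi}}$ to $\overline{X_\pi}$ through the $\PP^1$-bundle arising from multiplication by the minimal parabolic $P_i \supset B_{+}$, and then tracking the effect on K-polynomials along the pushforward/pullback associated to this bundle.

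The main obstacle is the inductive step in (iii): the divided-difference recursion is intrinsically geometric, and converting it into a clean equality of K-polynomials requires a careful comparison between the $\ZZ^p\oplus\ZZ^p$-graded resolutions of $\widetilde{R}/I_\pi$ and $\widetilde{R}/I_{\mathfrak{s}_i\pi}$, or equivalently a neat transfer formula along the $\PP^1$-bundle that is compatible with the very non-standard multigrading $\deg(x_{i,j}) = \ee_i \oplus -\ee_j$. All three statements are carried out in full detail in Miller--Sturmfels, so once the strategy above is in place the proof of \autoref{thm_Schubert} reduces to assembling \cite[Corollary~16.29, Corollary~16.44, Theorem~15.40]{miller2005combinatorial}.
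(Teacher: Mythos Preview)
Your proposal is correct and aligns with the paper's treatment: in the paper this theorem is not proved at all but is simply stated with the three citations \cite[Corollary~16.29, Corollary~16.44, Theorem~15.40]{miller2005combinatorial} attached inline, exactly the references you invoke in your final sentence. Your sketch of the Knutson--Miller antidiagonal degeneration for (i)--(ii) and the divided-difference induction for (iii) goes beyond what the paper does, but it accurately summarizes the arguments behind those citations, so nothing is missing or mistaken.
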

	
	The next technical lemma will allow us to substitute the  grading of $\widetilde{R}$ which has negative components for the degrees of the variables.  
	Let $R =\kk[x_{i,j} \mid (i,j) \in [p]\times[p]]$ with induced $(\ZZ^p\oplus \ZZ^p)$-grading  by setting $\deg(x_{i,j}) = \ee_i \oplus \ee_j \in \ZZ^p \oplus \ZZ^p$.
	As for $\widetilde{R}$, define multidegrees over $R$ in the 
	polynomial ring $\ZZ[\ttt, \sss]$.
	
	\begin{lemma}
		\label{lem_neg_to_pos}
		Let $I \subset \widetilde{R}$ be an $\widetilde{R}$-homogeneous ideal, and denote also by $I$ the corresponding $R$-homogeneous ideal in $R$.
		Then we have 
		$
		\mathcal{C}(\widetilde{R}/I; t_1,\ldots,t_p, s_1,\ldots,s_p)  = \mathcal{C}(R/I; t_1,\ldots,t_p, -s_1,\ldots,-s_p).
		$
	\end{lemma}
	\begin{proof}
		Notice that, if $\widetilde{F}_\bullet$ is a $(\ZZ^p \oplus \ZZ^p)$-graded free $\widetilde{R}$-resolution of $\widetilde{R}/I$ with $\widetilde{F}_i = \bigoplus_{j} \widetilde{R}(-\mathbf{a}_{i,j},-\mathbf{b}_{i,j})$, then there is a corresponding  $(\ZZ^p \oplus \ZZ^p)$-graded free $R$-resolution $F_\bullet$ of $R/I$ with $F_i = \bigoplus_{j} R(-\mathbf{a}_{i,j},\mathbf{b}_{i,j})$.
		By definition, this yields the equality of $K$-polynomials
		$$
		\mathcal{K}(\widetilde{R}/I;\ttt,\sss) = \mathcal{K}(\widetilde{R}/I;t_1,\ldots,t_p,s_1,\ldots,s_p) = \mathcal{K}(R/I;t_1,\ldots,t_p,s_1^{-1},\ldots,s_p^{-1}) = 	\mathcal{K}(R/I;\ttt,\sss^{\mathbf{-1}}).
		$$
		From \cite[Claim 8.54]{miller2005combinatorial}, we have $\mathcal{K}(R/I;\mathbf{1-t},\mathbf{1-s}) = \mathcal{C}(R/I;\ttt,\sss) + Q(\ttt,\sss)$, where $Q(\ttt,\sss)$ is a polynomial with terms of degree at least $\codim(I) + 1$.
		Equivalently, we get $\mathcal{K}(R/I;\ttt,\sss) = \mathcal{C}(R/I;\mathbf{1-t},\mathbf{1-s}) + Q(\mathbf{1-t},\mathbf{1-s})$. 
		It then follows that 
		$$
		\mathcal{K}(\widetilde{R}/I;\mathbf{1-t},\mathbf{1-s}) = \mathcal{C}(R/I; t_1,\ldots,t_p,1-\tfrac{1}{1-s_1},\ldots,1-\tfrac{1}{1-s_p}) + Q(t_1,\ldots,t_p,1-\tfrac{1}{1-s_1},\ldots,1-\tfrac{1}{1-s_p}).
		$$
		By expanding the right hand side of the above equality, the result of the lemma is obtained.	
	\end{proof}

	\section{Standardization of ideals}
	\label{sec:main}
	In this section, we develop a process of standardization of ideals in a certain non-standard multigrading. 
	This process will allow us to show that the support of the multidegree polynomial of any Cohen-Macaulay prime ideal is a discrete polymatroid in the non-standard multigradings that we consider.
	The following setup is used throughout this section.
	
	\begin{setup}
		\label{setup_standardization}
		Let $p \ge 1$ be a positive integer and $\kk$ be a field.
		Let $R$ and $S$ be the polynomial rings $R=\kk[\xx]$ and $S=\kk[\mathbf{w}, \mathbf{z}]$ over the set of variables $\xx = \{x_{i,j}\}_{1\le i,j\le p}$, $\mathbf{w} = \{w_{i,j}\}_{1\le i,j\le p}$ and $\mathbf{z} = \{z_{i,j}\}_{1\le i,j\le p}$.
		We consider $R$ and $S$ as $(\ZZ^p \oplus \ZZ^p)$-graded rings by setting that 
		$$
		\deg(x_{i,j}) = \ee_i \oplus \ee_j,  \quad \deg(w_{i,j}) = \ee_i \oplus \mathbf{0} \quad \text{and} \quad \deg(z_{i,j}) = \mathbf{0} \oplus \ee_j,
		$$
		where $\ee_i \in \ZZ^p$ denotes the $i$-th elementary vector and $\mathbf{0} \in \ZZ^p$ denotes the zero vector.
		We define the $\kk$-algebra homomorphism
		\begin{equation*}
			\phi: R=\kk[\xx] \longrightarrow S = \kk[\mathbf{w}, \mathbf{z}], \quad
			\phi(x_{i,j}) = w_{i,j}z_{i,j}.
		\end{equation*}
		For an $R$-homogeneous ideal $I \subset R$, we say that the extension  $\phi(I) S$ is the \emph{standardization} of $I$, as $\phi(I) S$ is an $S$-homogeneous ideal in the standard multigraded polynomial ring $S$.
		Let $\ttt = \{t_1,\ldots,t_p\}$ and $\sss = \{s_1,\ldots,s_p\}$ be variables indexing the $(\ZZ^p \oplus \ZZ^p)$-grading, where $t_i$ corresponds with $\ee_i \oplus \mathbf{0} \in \ZZ^p \oplus \ZZ^p$ and $s_i$ corresponds with $\mathbf{0} \oplus \ee_i \in \ZZ^p \oplus \ZZ^p$.
		Given a finitely generated graded $R$-module $M$ and a finitely generated graded $S$-module $N$, by a slight abuse of notation, we consider both multidegrees $\mathcal{C}(M;\ttt,\sss)$ and $\mathcal{C}(N;\ttt,\sss)$ as elements of the same polynomial ring $\ZZ[\ttt, \sss]=\ZZ[t_1,\ldots,t_p,s_1,\ldots,s_p]$.
	\end{setup}
	First, we show some basic properties of the process of standardization. 
	
	\begin{proposition}
		\label{prop_std}
		Assume \autoref{setup_standardization}.
		Let $I \subset R$ be an $R$-homogeneous ideal and $J = \phi(I)S$ be its standardization. 
		Then, the following statements hold:
		\begin{enumerate}[\rm (i)]
			\item $\codim(I) = \codim(J)$.
			\item $\mathcal{C}(R/I;\ttt,\sss) = \mathcal{C}(S/J;\ttt,\sss)$.
			\item If $R/I$ is a Cohen-Macaulay ring, then $S/J$ also is.
			\item Let $>$ be a monomial order on $R$ and $>'$ be a monomial order on $S$ which is compatible with $\phi$ (i.e.,~if $f,g \in R$ with $f > g$, then $\phi(f) >' \phi(g)$).
			Then $\init_{>'}(J) = \phi(\init_{>}(I))S$.
		\end{enumerate}
	\end{proposition}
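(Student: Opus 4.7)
The plan rests on the observation that $\phi$ endows $S$ with the structure of a free module over $\phi(R) \cong R$. Indeed, every monomial in $S$ factors uniquely as $\prod_{i,j} (w_{i,j} z_{i,j})^{c_{i,j}} \cdot \prod_{i,j} w_{i,j}^{a_{i,j}} z_{i,j}^{b_{i,j}}$ with $c_{i,j} \ge 0$ and $\min(a_{i,j}, b_{i,j}) = 0$ for each $(i,j)$; thus $S$ is free over $\phi(R)$ with multigraded basis $\mathcal{B} = \{\prod_{i,j} w_{i,j}^{a_{i,j}} z_{i,j}^{b_{i,j}} : \min(a_{i,j}, b_{i,j}) = 0 \text{ for all } i,j\}$, and in particular $\phi$ is faithfully flat.

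For (i), (ii) and (iii) I would work in the auxiliary $(\ZZ^p \oplus \ZZ^p)$-graded polynomial ring $T = \kk[\xx, \mathbf{w}, \mathbf{z}]$ together with the multihomogeneous ideal $K = (x_{i,j} - w_{i,j} z_{i,j} : 1 \le i,j \le p) \subset T$, for which the substitution $x_{i,j} \mapsto w_{i,j} z_{i,j}$ gives $T/K \cong S$ and $T/(I + K) \cong S/J$. The crucial step is to verify that the generators of $K$ form a regular sequence in $T/I = (R/I)[\mathbf{w}, \mathbf{z}]$: in any order, $x_{i,j} - w_{i,j} z_{i,j}$ is linear in $w_{i,j}$ with leading coefficient $-z_{i,j}$, and $z_{i,j}$ remains a free polynomial variable in the quotient by the previously used relations (since those involve only different $w$- and $z$-variables), hence a non-zerodivisor. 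This gives $\dim(S/J) = \dim(T/I) - p^2 = \dim(R/I) + (\dim S - \dim R)$, yielding $\codim(J) = \codim(I)$ for (i). If $R/I$ is Cohen-Macaulay, then $T/I$ is Cohen-Macaulay as a polynomial extension and quotienting by the regular sequence preserves this, proving (iii). For (ii), faithful flatness of $\phi$ makes $F_\bullet \otimes_R S$ a $(\ZZ^p \oplus \ZZ^p)$-graded free $S$-resolution of $S/J$ with the same graded shifts as any $(\ZZ^p \oplus \ZZ^p)$-graded free $R$-resolution $F_\bullet \to R/I$; thus $\mathcal{K}(R/I;\ttt,\sss) = \mathcal{K}(S/J;\ttt,\sss)$, and combining with (i) to extract the same degree-$\codim$ component of $\mathcal{K}(\mathbf{1}-\ttt, \mathbf{1}-\sss)$ yields the equality of multidegree polynomials.

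For (iv), the free-basis decomposition gives $J = \bigoplus_{m \in \mathcal{B}} \phi(I) \cdot m$, so each $f \in J$ has a unique expansion $f = \sum_{m} \phi(f_m) \cdot m$ with $f_m \in I$. Since $\phi$ sends monomials to monomials, and the monomials $\phi(\xx^\alpha) \cdot m$ are pairwise distinct as $\alpha$ and $m \in \mathcal{B}$ vary (the exponent $\alpha_{i,j}$ is recovered as the minimum of the $w_{i,j}$- and $z_{i,j}$-exponents of the image), the compatibility of $>'$ with $\phi$ forces the leading monomial of each block $\phi(f_m) \cdot m$ to equal $\phi(\init_{>}(f_m)) \cdot m$. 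Hence $\init_{>'}(f)$ has the form $\phi(\init_{>}(f_{m_0})) \cdot m_0$ for some $m_0$, giving $\init_{>'}(J) \subseteq \phi(\init_{>}(I)) S$; the reverse inclusion is immediate from $\init_{>'}(\phi(g)) = \phi(\init_{>}(g))$ for every $g \in I$, again by compatibility. The main obstacle is the careful regular-sequence verification in $T/I$ for an arbitrary homogeneous ideal $I$, which underpins both (i) and (iii); the flat base change for (ii) and the monomial bookkeeping for (iv) are then routine consequences of the free $R$-module structure of $S$.
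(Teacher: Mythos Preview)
Your argument is correct and, for parts (i)--(iii), follows the paper's approach essentially verbatim: the same auxiliary ring $T=\kk[\xx,\mathbf{w},\mathbf{z}]$, the same regular sequence $\{x_{i,j}-w_{i,j}z_{i,j}\}$ on $T/IT$, and the same passage from a free $R$-resolution to a free $S$-resolution with identical shifts (the paper phrases this via $\Tor^T$ vanishing rather than your direct appeal to flatness/freeness of $S$ over $\phi(R)$, but these are the same fact). The one genuine difference is part (iv): the paper simply invokes Buchberger's algorithm, whereas your free-basis decomposition $J=\bigoplus_{m\in\mathcal{B}}\phi(I)\cdot m$ gives a cleaner and fully self-contained monomial argument; your treatment here is more explicit than the paper's.
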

	\begin{proof}
		Let $T$ be the polynomial ring $T=\kk[\xx,\mathbf{w},\mathbf{z}] \cong R \otimes_\kk S$ with its natural $(\ZZ^p \oplus \ZZ^p)$-grading induced from the ones of $R$ and $S$. 
		We now think of $R$ and $S$ as subrings of $T$.
		Consider the quotient ring $T/IT$ and notice that $\{x_{i,j} - w_{i,j}z_{i,j}\}_{1\le i,j \le p}$ is a regular sequence of homogeneous elements over $T/IT$.
		We also have the following natural isomorphism 
		$$
		\frac{T}{IT + \left(\{x_{i,j} - w_{i,j}z_{i,j}\}_{1\le i,j \le p}\right)} \;\cong\; S/J.
		$$
		As the natural inclusion $R \hookrightarrow T$ is a polynomial extension, we have that $\dim(T/IT) = \dim(R/I) + \dim(S) = \dim(R) + \dim(S) - \codim(I)$ and that $T/IT$ is Cohen-Macaulay when $R/I$ is.
		So, by cutting out with the regular sequence described above, we obtain that 
		$		
		\dim(S/J) = \dim(T/IT) - \dim(R) = \dim(S) - \codim(I)
		$
		and that $S/J$ is Cohen-Macaulay when $T/IT$ is.
		This completes the proofs of parts (i) and (iii).
		
		Let $F_\bullet : \cdots  \xrightarrow{f_2} F_1 \xrightarrow{f_1} F_0$ be a graded free $R$-resolution of $R/I$.
		Since $\{x_{i,j} - w_{i,j}z_{i,j}\}_{1\le i,j \le p}$ is a regular sequence on both $T$ and $T/IT$, it follows that $\Tor_k^T\left(T/IT, T/(\{x_{i,j} - w_{i,j}z_{i,j}\}_{1\le i,j \le p})\right) = 0$ for all $k > 0$, and so $G_\bullet = F_\bullet \otimes_R T/(\{x_{i,j} - w_{i,j}z_{i,j}\}_{1\le i,j \le p})$ provides (up to isomorphism) a graded free $S$-resolution of $S/J$.
		The identification of $G_\bullet$ as a resolution of $S$-modules is the same as $\phi(F_\bullet)$ (more precisely,  $G_\bullet$ has the same shiftings as $F_\bullet$ in the $(\ZZ^p \oplus \ZZ^p)$-grading and the $i$-th differential matrix of $G_\bullet$ is given by the substitution $\phi(f_i)$ of $f_i$).
		Therefore,  by definition, we obtain the equality $\mathcal{C}(R/I;\ttt,\sss) = \mathcal{C}(S/J;\ttt,\sss)$ that shows part (ii).
		
		To show part (iv) we can use Buchberger's algorithm (see, e.g., \cite[Chapter 15]{EISEN_COMM}). 
		Indeed, we can perform essentially the same steps of the algorithm in a set of generators of $I$ and the corresponding set of generators for $J$.
	\end{proof}
	
	The following theorem provides the main result of this section. 
	It shows that the support of the multidegree polynomial is a discrete polymatroid for Cohen-Macaulay prime ideals in $R$.
	The proof is carried out by performing a standardization process that allows us to invoke \autoref{thm_pos_multdeg}.
	
	\begin{theorem}
		\label{thm_standardization}
		Assume \autoref{setup_standardization}.
		Let $I \subset R$ be an $R$-homogeneous Cohen-Macaulay prime ideal.
		Then, the support of the multidegree polynomial  $\mathcal{C}(R/I;\ttt, \sss)$ is a discrete polymatroid.
	\end{theorem}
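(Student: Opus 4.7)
The plan is to apply the standardization machinery of \autoref{setup_standardization} to reduce from the non-standard multigraded ring $R$ to the standard multigraded ring $S$, and then to invoke \autoref{thm_pos_multdeg} in the Cohen-Macaulay setting that results.

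First, I would set $J := \phi(I)S \subset S$ and apply \autoref{prop_std}. This yields $\codim(J) = \codim(I)$, the Cohen-Macaulay property of $S/J$, and the crucial identity
\[
\mathcal{C}(R/I;\ttt,\sss) \,=\, \mathcal{C}(S/J;\ttt,\sss).
\]
Since $S$ is a standard $\ZZ^{2p}$-graded polynomial ring, the problem is transferred to showing that $\supp(\mathcal{C}(S/J;\ttt,\sss))$ is a discrete polymatroid in $\NN^{2p}$, which is the natural habitat of \autoref{thm_pos_multdeg}.

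Second, I would address the fact that standardization typically destroys primeness: for instance, $\phi(x_{i,j}) = w_{i,j}z_{i,j}$ already splits. Nevertheless, because $S/J$ is Cohen-Macaulay, $J$ is unmixed, i.e., every associated prime of $J$ is a minimal prime and has codimension equal to $\codim(J)$. This is precisely the setting in which the geometric argument behind \autoref{thm_pos_multdeg} (via the positivity result in \cite{castillo2020multidegrees}) should still characterize the support of the multidegree polynomial by codimension bounds on $J$ itself. Concretely, I would establish the extension
\[
\bn \in \supp(\mathcal{C}(S/J;\ttt,\sss)) \;\Longleftrightarrow\; |\bn| = \codim(J)\ \text{and}\ \sum_{j \in \fJ} n_j \,\ge\, \codim(J_{(\fJ)})\ \text{for every}\ \fJ \subseteq [2p],
\]
and then conclude with the same rank-function computation $s(\fJ) = \sum_{j\in\fJ}m_j + r([2p]\setminus\fJ) - r([2p])$ that closes the proof of \autoref{thm_pos_multdeg}.

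The main obstacle is justifying this extension from primes to Cohen-Macaulay (equidimensional) ideals. A naive decomposition into minimal primes $\mathcal{C}(S/J;\ttt,\sss) = \sum_i e(J,\pp_i)\,\mathcal{C}(S/\pp_i;\ttt,\sss)$ writes the support as the union $\bigcup_i \supp(\mathcal{C}(S/\pp_i;\ttt,\sss))$, where each piece is a discrete polymatroid by \autoref{thm_pos_multdeg}; but a union of polymatroids need not be a polymatroid. The genuine difficulty is a quantifier swap: one needs a \emph{single} $\pp_i$ realizing the codimension bounds $\sum_{j\in\fJ}n_j \ge \codim(\pp_{i,(\fJ)})$ for every $\fJ$ simultaneously, rather than a $\pp_i$ varying with $\fJ$. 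The Cohen-Macaulay hypothesis is what resolves this: it controls the behavior of the contractions $J_{(\fJ)}$, ensuring $\codim(J_{(\fJ)}) = \min_i \codim(\pp_{i,(\fJ)})$ and that these contractions remain well-behaved enough that the intersection-theoretic positivity criterion of \cite{castillo2020multidegrees} applies to the equidimensional scheme $\multProj(S/J)$ directly, without passing through a particular irreducible component.
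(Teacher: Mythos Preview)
Your reduction via \autoref{prop_std} to the standard multigraded ring $S$ is fine, and you correctly isolate the real difficulty: the standardization $J=\phi(I)S$ is Cohen--Macaulay but typically not prime, and \autoref{thm_pos_multdeg} (equivalently, \cite[Theorem~A]{castillo2020multidegrees}) is stated and proved only for prime ideals. The gap is that you do not actually close the quantifier swap you describe. The identity $\codim(J_{(\fJ)})=\min_i\codim(\pp_{i,(\fJ)})$ is automatic and does not use Cohen--Macaulayness, while the assertion that the positivity criterion of \cite{castillo2020multidegrees} ``applies to the equidimensional scheme $\multProj(S/J)$ directly'' is exactly what is at stake and is not established there. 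Concretely, from $\supp\big(\mathcal{C}(S/J)\big)=\bigcup_i\supp\big(\mathcal{C}(S/\pp_i)\big)$ you obtain the set $\{\bn:\,\exists i\;\forall\fJ,\ \sum_{j\in\fJ}n_j\ge\codim(\pp_{i,(\fJ)})\}$, whereas your proposed description is $\{\bn:\,\forall\fJ\;\exists i,\ \sum_{j\in\fJ}n_j\ge\codim(\pp_{i,(\fJ)})\}$; the latter set is a priori larger, and nothing in your argument forces equality. Without this, you have neither the inequality description nor the polymatroid conclusion.

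The paper avoids this issue altogether by a different route. It first strips off the only visible source of non-primeness under $\phi$, namely the variables $x_{i,j}\in I$: writing $\mathcal{L}=\{(i,j):x_{i,j}\in I\}$ and $I=I'R+(x_{i,j}:(i,j)\in\mathcal{L})$ with $I'\subset R'$ containing no variable, one gets
\[
\mathcal{C}(R/I;\ttt,\sss)=\prod_{(i,j)\in\mathcal{L}}(t_i+s_j)\cdot \mathcal{C}(R/I'R;\ttt,\sss).
\]
The key step is then to show that the standardization $J'$ of $I'$ is \emph{prime}: Cohen--Macaulayness of $S/J'S$ (from \autoref{prop_std}) plus $x_{i,j}\notin I'$ force each $w_{i,j}z_{i,j}$ to be a non-zero-divisor on $S'/J'$, so primeness can be checked after inverting all $z_{i,j}$, where the automorphism $w_{i,j}\mapsto w_{i,j}/z_{i,j}$ identifies $J'$ with an extension of the prime $I'$. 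Now \autoref{thm_pos_multdeg} applies directly to the prime $J'S$, giving that $\supp(\mathcal{C}(R/I'R))$ is a discrete polymatroid, and the remaining factor contributes a Minkowski sum with the supports of linear forms $t_i+s_j$, which preserves the polymatroid property. In short, the paper does not extend \autoref{thm_pos_multdeg} to Cohen--Macaulay ideals; it arranges things so that the ideal to which the theorem is applied is genuinely prime.
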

	\begin{proof}
		Let $\mathcal{L} = \{ (i,j) \mid x_{i,j} \in I \}$ be the set of indices such that the corresponding variable belongs to $I$.
		We consider the polynomial rings 
		$
		R' =  \kk[x_{i,j} \mid (i,j) \not\in \mathcal{L}] \subset R
		$ 
		and
		$
		S' =  \kk[w_{i,j}, z_{i,j} \mid (i,j) \not\in \mathcal{L}] \subset S.
		$
		Let $I' \subset R'$ be the (unique) ideal that satisfies the condition $I = I'R + \left(x_{i,j} \mid (i,j) \in \mathcal{L}\right)$.
		By construction we have that $x_{i,j} \not\in I'$ for all $x_{i,j} \in R'$.
		Since $R/I \cong R'/I'$, it follows that $I'$ is also a Cohen-Macaulay prime ideal.
		
		Let $J' = \phi(I'R)S \cap S'$. 
		For any $w_{i,j}z_{i,j} \in S'$, \autoref{prop_std}(i) and the fact that the corresponding $x_{i,j}$ does not belong to the prime $I'$ imply that 
		$$
		\codim(J'S+w_{i,j}z_{i,j}S) = \codim(I'R + x_{i,j}R) = \codim(I'R) + 1 = \codim(J'S) + 1.
		$$
		By \autoref{prop_std}(iii), $S/J'S$ is Cohen-Macaulay, and so it necessarily follows that $w_{i,j}z_{i,j}$ is a non-zero-divisor over $S/J'S$ for all $w_{i,j}z_{i,j} \in S'$.
		Consequently, we obtain that $S'/J'$ is a domain if and only if $\left(S'/J'\right)_{\prod w_{i,j}z_{i,j}}$ is a domain.
		Let $B = \kk[w_{i,j},z_{i,j},z_{i,j}^{-1} \mid (i,j) \not\in \mathcal{L}]$ and consider the automorphism given by
		$$
		\psi : B \rightarrow B, \quad w_{i,j} \mapsto \frac{w_{i,j}}{z_{i,j}}, \quad  z_{i,j} \mapsto {z_{i,j}}.
		$$
		The ideal $\psi(J'B)$ coincides with the extension of $I'$ in $B$ under the ring homomorphism $R' \rightarrow B, x_{i,j} \mapsto w_{i,j}$, and so it follows that $\psi(J'B)$ and, consequently, $J'B$ are prime ideals.
		We then conclude that $J'$ is a prime ideal.
		
		Since the variables $x_{i,j}$ with indices in $\mathcal{L}$ form a regular sequence over $R/I'R$, we obtain the equation 
		\begin{equation}\label{equation:mink_sum}
			\mathcal{C}(R/I;\ttt, \sss) = \prod_{(i,j) \in \mathcal{L}} (t_i+s_j) \cdot \mathcal{C}(R/I'R;\ttt, \sss) 
		\end{equation}
		(see, e.g., \cite[Exercise 8.12]{miller2005combinatorial}).
		To conclude the proof, it is now sufficient to show that the support of $\mathcal{C}(R/I'R;\ttt, \sss)$ is a discrete polymatroid; indeed, we would obtain that the support of $\mathcal{C}(R/I;\ttt, \sss)$ is a Minkowski sum of a finite number of discrete polymatroids which in turn is also a discrete polymatroid  by  \cite[Corollary 46.2c]{schrijver2003combinatorial}.
		Finally, this condition follows by applying \autoref{thm_pos_multdeg} to the prime ideal $J'S$ and exploiting the equality $\mathcal{C}(R/I'R;\ttt, \sss) = \mathcal{C}(S/J'S;\ttt, \sss)$ from \autoref{prop_std}(ii).
	\end{proof}

	We are now ready to prove the main result of this paper.
	
	\begin{proof}[{\bf Proof of \autoref{thmA}}]
		As we already have all the necessary ingredients, the proof follows straightforwardly by combining \autoref{thm_Schubert}, \autoref{lem_neg_to_pos} and \autoref{thm_standardization}.
	\end{proof}
	
	Furthermore, we determine the defining inequalities of the discrete polymatroids in \autoref{thm_standardization} and, accordingly, in \autoref{thmA}.
	Similarly to \autoref{sect_recap_multdeg}, for any two subsets $\fJ_1, \fJ_2 \subseteq [p]$, we denote by $R_{(\fJ_1,\fJ_2)} \subseteq R$ and $S_{(\fJ_1,\fJ_2)} \subseteq S$ the $(\ZZ^{|\fJ_1|} \oplus \ZZ^{|\fJ_2|})$-graded $\kk$-algebras obtained by restricting to the positions in $\fJ_1$ for the first part $\ZZ^p \oplus \mathbf{0}$ of the grading, and to the ones in $\fJ_2$ for the second part of the grading $\mathbf{0} \oplus \ZZ^p$.
	
	\begin{theorem}
		\label{thm:ineqs}
		Assume \autoref{setup_standardization}.
		Let $I\subset R$ be an $R$-homogeneous Cohen-Macaulay prime ideal.
		Then, we have that the coefficient of $\ft^\fr\fs^\fc = t_1^{r_1}\cdots t_p^{r_p} s_1^{c_1}\cdots s_p^{c_p}$ is nonzero in $\mathcal{C}(R/I;\ttt, \sss)$ if and only if  
		\begin{enumerate}[\rm (i)]
			\item $\sum_{j \in [p]} r_j+ \sum_{j \in [p]} c_j = \codim(I)$
			\item For every $\mathfrak{J}_1,\mathfrak{J}_2\subseteq [p]$, we have that  $\sum_{j\in\mathfrak{J}_1}r_j+\sum_{j\in\mathfrak{J}_2}c_j\geq \codim \big(I_{(\fJ_1,\fJ_2)}\big)$, where $I_{(\fJ_1,\fJ_2)}$ is the contracted ideal $I_{(\fJ_1,\fJ_2)} = I \cap R_{(\fJ_1,\fJ_2)}$.
		\end{enumerate}
	\end{theorem}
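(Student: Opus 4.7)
The plan is to carry out the standardization reduction from \autoref{thm_standardization} and then read off the defining inequalities using \autoref{thm_pos_multdeg}, combined with a Minkowski-sum contribution from the variables of $I$. Reproducing that setup, let $\mathcal{L}=\{(i,j)\mid x_{i,j}\in I\}$, $R'=\kk[x_{i,j}\mid(i,j)\notin\mathcal{L}]$, $S'=\kk[w_{i,j},z_{i,j}\mid(i,j)\notin\mathcal{L}]$, and $I'\subset R'$ the Cohen--Macaulay prime with $I=I'R+(x_{i,j}\mid(i,j)\in\mathcal{L})$. As established there, $J'=\phi(I')S'$ and $J'S$ are prime, and \eqref{equation:mink_sum} combined with \autoref{prop_std}(ii) gives
$$
\mathcal{C}(R/I;\ttt,\sss)\;=\;\prod_{(i,j)\in\mathcal{L}}(t_i+s_j)\cdot\mathcal{C}(S/J'S;\ttt,\sss),
$$
so that $\supp\mathcal{C}(R/I;\ttt,\sss)$ is the Minkowski sum on the ground set $[p]\sqcup[p]$ of the polymatroids $\mathcal{P}_{J'}:=\supp\mathcal{C}(S/J'S;\ttt,\sss)$ and $\mathcal{P}_\mathcal{L}:=\supp\prod_{(i,j)\in\mathcal{L}}(t_i+s_j)$.

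Applying \autoref{thm_pos_multdeg} to the prime $J'S$ in the standard $\ZZ^{2p}$-graded ring $S$ characterizes $\mathcal{P}_{J'}$ by $|\fr|+|\fc|=\codim(J'S)$ together with $\sum_{j\in\fJ_1}r_j+\sum_{j\in\fJ_2}c_j\ge\codim\bigl((J'S)_{(\fJ_1,\fJ_2)}\bigr)$; equivalently, its rank function is $\rho_{J'}(\fJ_1,\fJ_2)=\codim(J'S)-\codim\bigl((J'S)_{(\fJ_1^c,\fJ_2^c)}\bigr)$. A direct combinatorial count identifies the rank function of $\mathcal{P}_\mathcal{L}$ as $\rho_\mathcal{L}(\fJ_1,\fJ_2)=|\mathcal{L}|-|\mathcal{L}\cap(\fJ_1^c\times\fJ_2^c)|$, since each $(i,j)\in\mathcal{L}$ can be placed into a $(\fJ_1,\fJ_2)$-restricted coordinate sum iff $i\in\fJ_1$ or $j\in\fJ_2$. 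Because rank functions add under Minkowski sum, combining these together with the identity $\codim(I)=\codim(J'S)+|\mathcal{L}|$ from \autoref{prop_std}(i) and the swap $\fJ_i\leftrightarrow\fJ_i^c$ should yield---via the total-degree constraint in (i)---exactly the lower-bound inequalities (ii), provided one verifies the codimension identity stated below.

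The key step, and where some care is required, is the identity
$$
\codim(I_{(\fK_1,\fK_2)})\;=\;\codim\bigl((J'S)_{(\fK_1,\fK_2)}\bigr)+|\mathcal{L}\cap(\fK_1\times\fK_2)|\qquad\text{for all }\fK_1,\fK_2\subseteq[p].
$$
I would prove it in two steps, passing through the intermediate ideal $(I')_{(\fK_1,\fK_2)}\subseteq R'_{(\fK_1,\fK_2)}$. First, the multigraded $\kk$-algebra isomorphism $R/I\cong R'/I'$ restricts to an isomorphism of $(\fK_1,\fK_2)$-components, $R_{(\fK_1,\fK_2)}/I_{(\fK_1,\fK_2)}\cong R'_{(\fK_1,\fK_2)}/(I')_{(\fK_1,\fK_2)}$; comparing with $\dim R_{(\fK_1,\fK_2)}=|\fK_1||\fK_2|$ and $\dim R'_{(\fK_1,\fK_2)}=|\fK_1||\fK_2|-|\mathcal{L}\cap(\fK_1\times\fK_2)|$ immediately yields $\codim(I_{(\fK_1,\fK_2)})=\codim((I')_{(\fK_1,\fK_2)})+|\mathcal{L}\cap(\fK_1\times\fK_2)|$. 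Second, I would establish the restricted standardization identity $\codim((I')_{(\fK_1,\fK_2)})=\codim((J'S)_{(\fK_1,\fK_2)})$. The inclusion $\phi((I')_{(\fK_1,\fK_2)})S_{(\fK_1,\fK_2)}\subseteq(J'S)_{(\fK_1,\fK_2)}$ is immediate; the reverse inclusion follows from a multidegree comparison on the generators of $J'S$, and equality of codimensions then follows by repeating the regular-sequence argument of \autoref{prop_std}(i) for the restricted map $\phi\colon R'_{(\fK_1,\fK_2)}\to S_{(\fK_1,\fK_2)}$ (the extra $w_{i,j}$ and $z_{i,j}$ in $S_{(\fK_1,\fK_2)}$ that are not involved in the generators of $(J'S)_{(\fK_1,\fK_2)}$ are simply polynomially adjoined variables and do not affect codimension). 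Combining the two steps completes the proof.
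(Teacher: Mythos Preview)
Your proposal is correct and follows essentially the same approach as the paper: both use the Minkowski decomposition \eqref{equation:mink_sum} together with \autoref{prop_std}(ii), read off the inequalities for the $J'S$ factor from \autoref{thm_pos_multdeg}, and then establish the key codimension identity
\[
\codim\bigl(I_{(\fK_1,\fK_2)}\bigr)=\codim\bigl((J'S)_{(\fK_1,\fK_2)}\bigr)+|\mathcal{L}\cap(\fK_1\times\fK_2)|.
\]
The only cosmetic differences are that the paper works directly with the lower bounds $C(\fJ_1,\fJ_2)$ while you phrase things via rank functions and then pass to complements, and that the paper splits off the $\mathcal{L}$-contribution on the $S$-side (comparing $J_{(\fJ_1,\fJ_2)}$ with $(J'S)_{(\fJ_1,\fJ_2)}$) whereas you split it off on the $R$-side (comparing $I_{(\fK_1,\fK_2)}$ with $(I')_{(\fK_1,\fK_2)}$); the underlying computations are the same.
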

	\begin{proof}
		By \autoref{thm_standardization}, we know that the Newton polytope of $\mathcal{C}(R/I;\ttt, \sss)$ is a base polymatroid polytope, and so, under the condition $\sum_{j \in [p]} r_j+ \sum_{j \in [p]} c_j = \codim(I)$, all its defining inequalities are of the form
		\begin{equation}\label{equation:minimum}
			\sum_{j\in\mathfrak{J}_1}r_j+\sum_{j\in\mathfrak{J}_2}c_j \geq C(\fJ_1,\fJ_2),
		\end{equation}
		for some constant $C(\fJ_1,\fJ_2)$ 
		that depends on the subsets $\fJ_1,\fJ_2 \subseteq [p]$.
		We now determine $C(\fJ_1,\fJ_2)$.
		
		We keep the same notation of the proof of \autoref{thm_standardization}, in particular, $I = I'R + \left(x_{i,j} \mid (i,j) \in \mathcal{L}\right)$.
		Equation \autoref{equation:mink_sum} decomposes the Newton polytope of $\mathcal{C}(R/I;\ttt, \sss)$ as the Minkowski sum of the Newton polytopes of $\prod_{(i,j) \in \mathcal{L}} (t_i+s_j)$ and  $\mathcal{C}(S/J'S;\ttt, \sss)$, both of which are also base polymatroid polytopes.
		So, we analyze the minimum of the sum in Equation \autoref{equation:minimum} with the two contributions.
		\begin{enumerate}[(a)]
			\item $\textrm{Newton}(\prod_{(i,j) \in \mathcal{L}} (t_i+s_j))$ is determined by the equality $\sum_{j \in [p]} r_j+ \sum_{j \in [p]} c_j = |\mathcal{L}|$ and the inequalities $\sum_{j\in\mathfrak{J}_1}r_j+\sum_{j\in\mathfrak{J}_2}c_j \ge \big|\{(i,j)\in\mathcal{L} \mid i\in\fJ_1 \text{ and } j\in\fJ_2\}\big|$.
			\smallskip
			\item Due to \autoref{thm_pos_multdeg}, $\textrm{Newton}(\mathcal{C}(S/J'S;\ttt, \sss))$ is determined by the equality $\sum_{j \in [p]} r_j+ \sum_{j \in [p]} c_j = \codim(J'S)$ and the inequalities $\sum_{j\in\mathfrak{J}_1}r_j+\sum_{j\in\mathfrak{J}_2}c_j \ge \codim \big({(J'S)}_{(\fJ_1,\fJ_2)}\big)$, where ${(J'S)}_{(\fJ_1,\fJ_2)}$ is the contracted ideal ${(J'S)}_{(\fJ_1,\fJ_2)} = J'S \cap S_{(\fJ_1,\fJ_2)}$.
		\end{enumerate}
		Notice that \autoref{prop_std}(i) yields the equality
		\[
		\codim \big(I_{(\fJ_1,\fJ_2)}\big) = \codim \big(J_{(\fJ_1,\fJ_2)}\big) = \codim \big({(J'S)}_{(\fJ_1,\fJ_2)}\big) + \big|\{(i,j)\in\mathcal{L} \mid i\in\fJ_1 \textrm{ and } j\in\fJ_2\}\big|.
		\]
		Therefore, since we can split the value of $C(\fJ_1,\fJ_2)$ in terms of the sum of the defining inequalities of the Newton polytopes of $\prod_{(i,j) \in \mathcal{L}} (t_i+s_j)$ and $\mathcal{C}(S/J'S;\ttt, \sss)$, it follows that $C(\fJ_1,\fJ_2) = \codim \big(I_{(\fJ_1,\fJ_2)}\big)$. 
		This concludes the proof of the theorem.
	\end{proof}
	
	Finally, we perform a simple computation out of the six possible permutations in $\mathscr{S}_3$ (see \cite[Examples 15.4, 15.42]{miller2005combinatorial}).
	
	\begin{example}[{$p = 3$ and $\pi = (1, 3, 2)$}]
		The Schubert determinantal ideal and the double Schubert polynomials are given by $I_{132} = \left(x_{1,1}x_{2,2}-x_{1,2}x_{2,1}\right)$ and $\fS_{132} = t_1 + t_2 - s_1 - s_2$.
		The standardization of $I_{132}$ is the ideal $J = \left(w_{1,1}w_{2,2}z_{1,1}z_{2,2}-w_{1,2}w_{2,1}z_{1,2}z_{2,1}\right) \in S$.
		The ideal $J$ is prime and $S$ has a standard $(\ZZ^3 \oplus \ZZ^3)$-grading.
		One can compute that 
		$$
		\mathcal{C}(S/J; \ttt, \sss) = t_1+t_2+s_1+s_2
		$$
		(see \cite[Exercise 8.12]{miller2005combinatorial}, or  just utilize the built-in command \texttt{multidegree} on the computer algebra system \emph{Macaulay2} \cite{M2}).
		Coinciding with the claim of \autoref{thm_pos_multdeg}, the support of $\mathcal{C}(S/J; \ttt, \sss)$ is a discrete polymatroid.
		Notice that $\fS_{132} = t_1 + t_2 - s_1 - s_2 = \mathcal{C}(S/J; \ttt, -\sss)$, as shown by \autoref{lem_neg_to_pos}.
	\end{example}

	\begin{remark}
		From the conjectures stated by Monical, Tokcan and Yong \cite{monical2019newton}, a remaining open one is to show that Grothendieck polynomials also satisfy the SNP property (see \cite[Conjecture 5.5]{monical2019newton}). 
		In \cite{K_POLY_MULT_FREE}, we settled a particular case of this conjecture.
		More precisely, we showed that the support of a Grothendieck polynomial is a generalized polymatroid when the Schubert polynomial is zero-one (see \cite[Theorem B]{K_POLY_MULT_FREE}).
	\end{remark}

	
\begin{bibdiv}
\begin{biblist}

\bib{BERGERON_BILLEY}{article}{
      author={Bergeron, Nantel},
      author={Billey, Sara},
       title={R{C}-graphs and {S}chubert polynomials},
        date={1993},
        ISSN={1058-6458},
     journal={Experiment. Math.},
      volume={2},
      number={4},
       pages={257\ndash 269},
}

\bib{BILLEY_STANLEY}{article}{
      author={Billey, Sara~C.},
      author={Jockusch, William},
      author={Stanley, Richard~P.},
       title={Some combinatorial properties of {S}chubert polynomials},
        date={1993},
        ISSN={0925-9899},
     journal={J. Algebraic Combin.},
      volume={2},
      number={4},
       pages={345\ndash 374},
}

\bib{BILLEY_JOCKUSH_STANLEY}{article}{
      author={Billey, Sara~C.},
      author={Jockusch, William},
      author={Stanley, Richard~P.},
       title={Some combinatorial properties of {S}chubert polynomials},
        date={1993},
        ISSN={0925-9899},
     journal={J. Algebraic Combin.},
      volume={2},
      number={4},
       pages={345\ndash 374},
}

\bib{castillo2020multidegrees}{article}{
      author={Castillo, Federico},
      author={Cid-Ruiz, Yairon},
      author={Li, Binglin},
      author={Monta{\~n}o, Jonathan},
      author={Zhang, Naizhen},
       title={When are multidegrees positive?},
        date={2020},
     journal={Advances in Mathematics},
      volume={374},
       pages={107382},
}

\bib{K_POLY_MULT_FREE}{article}{
      author={Castillo, Federico},
      author={Cid-Ruiz, Yairon},
      author={Mohammadi, Fatemeh},
      author={Monta{\~n}o, Jonathan},
       title={K-polynomials of multiplicity-free varieties},
        date={2022},
     journal={arXiv preprint arXiv:2212.13091},
}

\bib{cidruiz2021mixed}{article}{
      author={Cid-Ruiz, Yairon},
       title={Mixed multiplicities and projective degrees of rational maps},
        date={2021},
     journal={J. Algebra},
      volume={566},
       pages={136\ndash 162},
}

\bib{EISEN_COMM}{book}{
      author={Eisenbud, David},
       title={Commutative algebra with a view towards algebraic geometry},
      series={Graduate Texts in Mathematics, 150},
   publisher={Springer-Verlag},
        date={1995},
}

\bib{FEHER_RIMANYI}{article}{
      author={Feh\'{e}r, L\'{a}szl\'{o}~M.},
      author={Rim\'{a}nyi, Rich\'{a}rd},
       title={Schur and {S}chubert polynomials as {T}hom
  polynomials---cohomology of moduli spaces},
        date={2003},
        ISSN={1895-1074,1644-3616},
     journal={Cent. Eur. J. Math.},
      volume={1},
      number={4},
       pages={418\ndash 434},
         url={https://doi.org/10.2478/BF02475176},
      review={\MR{2040647}},
}

\bib{fink2018schubert}{article}{
      author={Fink, Alex},
      author={M{\'e}sz{\'a}ros, Karola},
      author={Dizier, Avery~St},
       title={Schubert polynomials as integer point transforms of generalized
  permutahedra},
        date={2018},
     journal={Advances in Mathematics},
      volume={332},
       pages={465\ndash 475},
}

\bib{fomin1994grothendieck}{inproceedings}{
      author={Fomin, Sergey},
      author={Kirillov, Anatol~N},
       title={Grothendieck polynomials and the {Y}ang-{B}axter equation},
        date={1994},
   booktitle={Proc. formal power series and alg. comb},
       pages={183\ndash 190},
}

\bib{FULTON_SCHUBERT}{article}{
      author={Fulton, William},
       title={Flags, {S}chubert polynomials, degeneracy loci, and determinantal
  formulas},
        date={1992},
        ISSN={0012-7094,1547-7398},
     journal={Duke Math. J.},
      volume={65},
      number={3},
       pages={381\ndash 420},
         url={https://doi.org/10.1215/S0012-7094-92-06516-1},
      review={\MR{1154177}},
}

\bib{M2}{misc}{
      author={Grayson, Daniel~R.},
      author={Stillman, Michael~E.},
       title={Macaulay2, a software system for research in algebraic geometry},
         how={Available at \url{https://math.uiuc.edu/Macaulay2/}},
}

\bib{huh2022logarithmic}{article}{
      author={Huh, June},
      author={Matherne, Jacob~P.},
      author={M\'{e}sz\'{a}ros, Karola},
      author={St.~Dizier, Avery},
       title={Logarithmic concavity of {S}chur and related polynomials},
        date={2022},
        ISSN={0002-9947,1088-6850},
     journal={Trans. Amer. Math. Soc.},
      volume={375},
      number={6},
       pages={4411\ndash 4427},
         url={https://doi.org/10.1090/tran/8606},
      review={\MR{4419063}},
}

\bib{KNUTSON_MILLER_SUBWORD}{article}{
      author={Knutson, Allen},
      author={Miller, Ezra},
       title={Subword complexes in {C}oxeter groups},
        date={2004},
        ISSN={0001-8708},
     journal={Advances in Mathematics},
      volume={184},
      number={1},
       pages={161\ndash 176},
}

\bib{KNUTSON_MILLER_SCHUBERT}{article}{
      author={Knutson, Allen},
      author={Miller, Ezra},
       title={Gr\"{o}bner geometry of {S}chubert polynomials},
        date={2005},
        ISSN={0003-486X},
     journal={Annals of Mathematics},
      volume={161},
      number={3},
       pages={1245\ndash 1318},
}

\bib{KOHNERT}{article}{
      author={Kohnert, Axel},
       title={Weintrauben, {P}olynome, {T}ableaux},
        date={1991},
        ISSN={0172-1062},
     journal={Bayreuth. Math. Schr.},
      number={38},
       pages={1\ndash 97},
        note={Dissertation, Universit\"{a}t Bayreuth, Bayreuth, 1990},
}

\bib{LLS}{article}{
      author={Lam, Thomas},
      author={Lee, Seung~Jin},
      author={Shimozono, Mark},
       title={Back stable {S}chubert calculus},
        date={2021},
        ISSN={0010-437X,1570-5846},
     journal={Compos. Math.},
      volume={157},
      number={5},
       pages={883\ndash 962},
         url={https://doi.org/10.1112/S0010437X21007028},
      review={\MR{4252201}},
}

\bib{SCUBERT_POLY_L_S}{article}{
      author={Lascoux, Alain},
      author={Sch\"{u}tzenberger, Marcel-Paul},
       title={Polyn\^{o}mes de {S}chubert},
        date={1982},
     journal={C. R. Acad. Sci. Paris S\'{e}r. I Math.},
      volume={294},
      number={13},
       pages={447\ndash 450},
}

\bib{miller2005combinatorial}{book}{
      author={Miller, Ezra},
      author={Sturmfels, Bernd},
       title={Combinatorial commutative algebra},
      series={Graduate Texts in Mathematics},
   publisher={Springer-Verlag, New York},
        date={2005},
      volume={227},
}

\bib{monical2019newton}{article}{
      author={Monical, Cara},
      author={Tokcan, Neriman},
      author={Yong, Alexander},
       title={Newton polytopes in algebraic combinatorics},
        date={2019},
     journal={Selecta Mathematica},
      volume={25},
      number={5},
       pages={1\ndash 37},
}

\bib{schrijver2003combinatorial}{book}{
      author={Schrijver, Alexander},
       title={Combinatorial optimization. {P}olyhedra and efficiency. {V}ol.
  {B}},
      series={Algorithms and Combinatorics},
   publisher={Springer-Verlag, Berlin},
        date={2003},
      volume={24},
        note={Matroids, trees, stable sets, Chapters 39--69},
}

\end{biblist}
\end{bibdiv}

\end{document}